\documentclass[hidelinks,12pt,a4paper,reqno]{amsart}
%
%
\usepackage{graphicx}
\usepackage[all]{xy}
\usepackage{amssymb}
\usepackage{amsmath}
\usepackage[utf8]{inputenc}
\usepackage{amsthm}

\newtheorem{proposition}{Proposition}
\newtheorem{corollary}{Corollary}
\newtheorem{remark}{Remark}
\newtheorem{definition}{Definition}
\newtheorem{theorem}{Theorem}

\usepackage[backref=page]{hyperref}
\renewcommand*{\backref}[1]{}
\renewcommand*{\backrefalt}[4]{%
    \ifcase #1 (Not cited.)%
    \or        (Cited on page~#2.)%
    \else      (Cited on pages~#2.)%
    \fi}
%
%
%
\newcommand{\C}{\mathbf{C}}

\newcommand{\N}{\mathbf{N}}

\newcommand{\A}{\mathcal{R}}
\newcommand{\V}{\mathcal{V}}

\newcommand{\Mon}{\mathbf{Mon}}
\newcommand{\Ord}{\mathbf{Ord}}
\newcommand{\RGraph}{\mathbf{RGraph}}

\newcommand{\Mono}{\mathbf{Mono}}
\newcommand{\NMono}{\mathbf{NMono}}

\newcommand{\RRel}{\mathbf{RRel}}
\newcommand{\Eq}{\mathbf{Eq}}
\newcommand{\Rel}{\mathbf{Rel}}
\newcommand{\Fix}{\mathbf{Fix}}

\newcommand{\Set}{\mathbf{Set}}
\renewcommand{\S}{\mathbf{S}}


%
%
\begin{document}

\title[On the normality of monoid monomorphisms]{On the normality of monoid monomorphisms}


\author[N. Martins-Ferreira]{Nelson Martins-Ferreira}
\address[Nelson Martins-Ferreira]{Instituto Politécnico de Leiria, Leiria, Portugal}
\thanks{This work is supported by Fundação para a Ciência e a Tecnologia FCT/MCTES (PIDDAC) through the following Projects:  Associate Laboratory ARISE LA/P/\-0112/2020; UIDP/04044/2020; UIDB/04\-044/2020; PAMI - ROTEIRO/0328/2013 (Nº 022158); MATIS (CEN\-TRO-01-0145-FEDER-000014 - 3362); Generative.Thermodynamic; by CDRSP and ESTG from the Polytechnic Institute of Leiria.}
\email{martins.ferreira@ipleiria.pt}

\author[M. Sobral]{Manuela Sobral}
\address[Manuela Sobral]{CMUC and Departamento de
Matem\'atica, Universidade de Coimbra, 3001--501 Coimbra,
Portugal}
\thanks{The second author was partially supported by the Centre for Mathematics of
the University of Coimbra -- UID/MAT/00324/2020}
\email{sobral@mat.uc.pt}

\begin{abstract}
In the category of monoids we characterize monomorphisms that are normal, in an appropriate sense,  to internal reflexive relations,  preorders or equivalence relations. The zero-classes of such internal relations are first  described in terms of convenient syntactic relations associated to them and then through the adjunctions associated with the corresponding normalization functors. The largest categorical equivalences induced by these adjunctions   provide an equivalence between the categories of  relations generated by their zero-classes and the ones of monomorphisms that we suggest to call \emph{normal with respect to} the internal relations considered. This idea, although being transverse to the literature in the field, has not in our opinion been presented and explored in full generality. The existence of adjoints to the normalization functors permits developing a theory of normal monomorphisms, thus extending many results from groups and protomodular categories to monoids and unital categories.

%

\end{abstract}
\keywords{Internal binary relations. Zero-classes. Clots. Positive cones. Normal subojects. Syntactic relations}
\subjclass[2010]{18B10, 06F05, 18A40,  08A30}

\maketitle


\today

\section{Introduction}

In the category of groups every internal reflexive relation
is an internal equivalence relation, i.e. a congruence. In addition, the zero-classes of such congruences, that are exactly the normal subgroups, determine all the other classes and so the congruence itself.
 As it is well known, this is not the case in the category $\Mon$ of monoids where we have strict inclusions
\begin{equation}\label{eq:stric inclusions}
\Eq(\Mon) \subset\Ord( \Mon) \subset \RRel(\Mon)\end{equation}
of the full subcategories of the category $\Rel(\Mon)$ of internal relations in monoids consisting of reflexive relations, preorders and equivalence relations. Furthermore, the corresponding zero-classes
do not determine the relations, in general.

{\em Clots} \cite{AU} are the zero-classes of internal reflexive relations and {\em positive cones} is the name of the zero-classes
of preorders. The {\em normal submonoids} are the zero-classes of congruences.
In \cite{MFS}, internal transitive and reflexive relations in monoids, i.e. the preordered monoids,  denoted there by $\Ord\Mon$, were studied and an equivalence was established between the full 
subcategory $\Ord\Mon^*$ of all preordered monoids whose preorder is induced by its positive cone $P$ (where $a \leq_ P b \Leftrightarrow b \in P + a $) and the full subcategory of monomorphisms in monoids that consists of what were called there {\em the right normal mononorphisms} (those inclusions $ M \to A$ for which $ a + M \subseteq M + a$ for every $a \in A$). Here we are going to describe the larger equivalence of categories induced by the adjunction $F \dashv N$, where $N$ is the normalization functor, which, in particular, provides a categorical definition of positive cone.

The categorical definition of a clot was given in \cite{JMU1} for semi-abelian categories and then in \cite{JMU2} in pointed regular categories with finite coproducts.  In Section 3, we consider clots in what seems to be the more general context where the definition makes sense and its correspondence to internal reflexive relations, called semicongruences there, can be studied.

Inspired by the so-called {\em syntactic congruence}
and {\em syntactic preorder} for subsets $M$ of a free monoid $A$ (see \cite{SE, P}), we define
what we call the {\em M-congruence} and the {\em M-preorder}, when $M$ is a submonoid of an arbitrary monoid $A$. The name is justified by the fact that these relations are completely determined by $M$. Furthermore, they characterize the submonoids of $A$ that are the zero classes of congruences and of preorders, respectively (Propositions \ref{prop: M congruence} and  \ref{prop:M preorder}).

We also define, for each submonoid $M$ of $A$, a reflexive relation $R_M$.
Whenever $R_M$ is an internal relation, in which case  we call it the {\em $M$-reflexive relation} of the submonoid $M$,  we obtain a completely analogous characterization of the submonoids that are clots (Proposition \ref{prop: M reflexive}).
There is a large class of monoids for which these reflexive relations are indeed internal that includes the finite and the Dedekind finite monoids. However, it is an open problem to decide whether this holds or not for all monoids as stated in Example \ref{eg:10} (see Section \ref{sec:examples}).

So natural (and useful) these relations may be, they are not functorial: there is no functor with these functions of objects as we show in
Example \ref{eg:8}.
In contrast, in Section 3, we describe such functorial relations as left adjoints to the three normalization functors $$N\colon \A \to \Mono(\Mon),$$  where $\A$ is one of the categories of internal relations referred to above as instances of a more general situation (Theorem \ref{thm:1}).

The largest equivalence of categories induced by each of these adjunctions $F \dashv N (\eta, \varepsilon)$, i.e. $Fix(\varepsilon) \sim Fix(\eta)$, between the full subcategories with all the objects for which the counit or the unit are isomorphisms, respectively, have a nice interpretation.  In each case, the full subcategory $Fix(\varepsilon)$ consists of all the corresponding relations that are generated by the zero-class (in the sense that such a relation is the smallest one
with the same zero-class)
and $Fix(\eta)$ is the full subcategory $\NMono(\Mon)$ of $\Mono(\Mon)$ consisting of what we call  {\em the normal monomorphisms} with respect to objects of $\A$ that are exactly the clots, the positive cones and the normal monomorphisms, respectively.


The main purpose of this paper is to provide a systematic study of normal monomorphisms relative to specific types of internal binary (endo)relations and it is
is organized as follows. In Section 2, specific congruences, internal preorders and reflexive relations on a monoid $A$, associated with each submonoid $M$ of $A$, are introduced in order to characterize their zero-classes. In Section 3, we define left adjoints to the corresponding normalization functors as a specialization of a more general situation in the context of a pointed, wellpowered and finitely complete category $\C$ with arbitrary intersections of subobjets and categories $\tau(\C)$ of binary relations of a fixed type $\tau$ on objects of $\C$, satisfying some conditions. In pointed varieties $\V$, the smallest internal relations on an object $A$ of $\V$ whose zero class contains a subobject $M$ of $A$ can be described as special subobjects of $A \times A$, as we do here when $\V=\Mon$, providing an alternative way to describe these left adjoints, a straightforward procedure that contributes to better exhibit the differences between the corresponding zero-classes.
 We put together our observations in three theorems that, in particular, give a systematic way to construct for each monomorphism that is normal to one of the internal relations considered above the smallest internal relation to which it is normal to.

 In the last section, we present several examples illustrating our previous claims.

\section{Syntactic relations}\label{sec:2}

In this section we define binary relations from the syntactical point of view that will play a role in the characterizations of clots, positive cones and normal submonoids presented in Theorems~\ref{thm:E}--\ref{thm:R}.

The syntactic monoid is widely used in computation theory due to its applications in automata theory and regular languages (see \cite{SE,H,Syntactic, P}). According to Eilenberg (\cite{SE}, p.75), the first clear-cut and systematic exposition of the syntactic monoid can be found in \cite{Syntactic}.
The syntactic congruence and  syntactic preorder are usually defined for subsets $M$ of a free monoid $\Sigma^{*}$. For our purposes we will consider an arbitrary monoid $A$ instead of a free monoid and submonoids $M$ of $A$ rather than subsets.
However, both the syntactic congruence and the syntactic preorder are internal relations regardless of whether $M$ is a submonoid or just a subset of $A$. The relevance of this aspect in our study contrasts with the one usually needed in computer science. Instead of asking if a subset $M$ of a monoid $A$ is recognizable (in the sense of \cite{SE}, p. 68), we will be interested in knowing whether $M$ is or not the zero-class of the syntactic congruence or the syntactic preorder, in which cases $M$ is necessarily a submonoid of $A$.

\subsection{The syntactic congruence}

Given a subset $M$ of a monoid $A$, the  syntactic  {\em M-congruence} is defined by $a \sim_M b$
 if, for every $x, y \in A$,
\[ x a y \in M \Leftrightarrow x b y \in M.\]
This binary relation is an equivalence relation. Furthermore, it is compatible with the monoid operation:
if $a \sim_M b$ and $a' \sim_M b'$  then,  for every $x, y \in A$,
\[ x a (a' y)  \in M \Leftrightarrow x b (a' y)=(xb)a'y \in M \Leftrightarrow (x b) b' y \in M. \]

In other words, the set $E=\{(a,b)\in A\times A\mid a\sim_{M}b\}$ is a submonoid of $A \times A$, and being an equivalence relation, it is a congruence on $A$. The syntactic monoid is the quotient $A/E$ and the zero-class of $E$,
\[[1]_{E}=\{u\in A\mid (1,u)\in E\},\]
is a submonoid of $A$.

\begin{proposition}\label{prop: M congruence} Let $M$ be a submonoid of the monoid $A$. Then $M$ is the zero-class of some congruence in $A$ if and only if the following condition holds

(C) For every $x, y \in A$ and every $u \in M$,
\[x  y \in M \Leftrightarrow x u y \in M.\]
In this case, $M$ is exactly the zero class of $\sim_M$.
\end{proposition}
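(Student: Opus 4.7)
The plan is to prove the two implications separately, using the already-established fact that $\sim_M$ is a congruence and leveraging the monoid structure (in particular that $1 \in M$).

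For the ``only if'' direction, I would assume $M = [1]_E$ for some congruence $E$ on $A$ and derive condition (C). Given $u \in M$, we have $(1,u) \in E$ by definition of the zero-class. Since $E$ is compatible with multiplication, for any $x,y \in A$ we get $(x \cdot 1 \cdot y, x \cdot u \cdot y) = (xy, xuy) \in E$. Because $E$ is an equivalence relation, membership in the class $[1]_E = M$ is preserved along $E$, so $xy \in M$ iff $xuy \in M$.

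For the ``if'' direction, assume (C) holds and show that $M$ equals the zero-class $[1]_{\sim_M}$; since $\sim_M$ has already been shown to be a congruence, this produces the desired congruence. The key is a two-way verification:
\begin{itemize}
\item If $u \sim_M 1$, then specializing the definition to $x = y = 1$ yields $u = 1 \cdot u \cdot 1 \in M$ iff $1 \cdot 1 \cdot 1 = 1 \in M$; since $M$ is a submonoid we have $1 \in M$, so $u \in M$.
\item Conversely, if $u \in M$, then (C) says exactly that for every $x,y \in A$, $xy \in M$ iff $xuy \in M$, which is the defining condition for $1 \sim_M u$.
\end{itemize}
Hence $M = [1]_{\sim_M}$, proving both that $M$ is the zero-class of some congruence and that $\sim_M$ witnesses this.

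I do not expect any substantive obstacle: the argument is essentially a direct unpacking of definitions. The only subtle point is to notice that the hypothesis ``$M$ is a \emph{submonoid}'' is genuinely used in the sufficiency argument, specifically to guarantee $1 \in M$, which is what allows one to conclude $u \in M$ from the specialization $x = y = 1$. This also explains implicitly why condition (C) would fail to characterize arbitrary subsets as zero-classes, keeping the statement aligned with the paper's emphasis on submonoids rather than subsets.
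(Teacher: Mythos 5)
Your proposal is correct and follows essentially the same route as the paper's proof: the same specialization $x=y=1$ (using $1\in M$) to get $[1]_{\sim_M}\subseteq M$, the observation that for $u\in M$ condition (C) is literally the statement $1\sim_M u$, and compatibility plus the equivalence-relation axioms for the necessity of (C). No gaps; the argument is a sound unpacking of the definitions, just as in the paper.
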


\begin{proof} The zero class of $\sim_M$ is contained in $M$. Indeed, if $1 \sim_M u$ then, for every $x, y \in A$,
\[ x y \in M \Leftrightarrow x u y \in M.\]
So, in particular, for $x=y=1$,  $ 1 \in M \Leftrightarrow u \in M$.

Furthermore, $M \subseteq [ 1 ]_{\sim_M}$ whenever $(C)$ holds: if $u \in M$ then it belongs to the zero class of the $M$-congruence when
$$x y \in M \Leftrightarrow x u y \in M,$$
that is exactly condition $(C)$.

If $M = [ 1 ]_ {\sim}$ for some congruence $\sim $ on $A$ then if $1 \sim u$, by reflexibility and compatibility,  we conclude that $ x y \sim x u y$. Since $x y \in M$ means that $1 \sim x y$ then also $ 1 \sim x u y$, by transitivity of the relation, and so $x u y \in M$. Consequently, condition $(C)$ is fulfilled.

So, we conclude that $M$ is the zero class for some congruence on $A$ if and only if it is the zero class of its  $M$-congruence.
\end{proof}

\begin{remark}
{\em In \cite{FR} Facchini and Rodaro present (Thm 9), the following necessary and sufficient condition for a submonoid  $M$  of a monoid $A$ to be a  kernel:

(F) For every $x, y \in A$,
\[x M y \cap M \neq \emptyset \Rightarrow x M y  \subseteq M,\]

that is, obviously, equivalent to condition

(C) For every $x, y \in A$ and every $u \in M$,
\[x  y \in M \Leftrightarrow x u y \in M,\]

as it is  easy to prove:

(F) $\Rightarrow$ (C)

If $x, y  \in A$ and $u \in M$ then
\[ xy \in M \Rightarrow x M y \cap M \neq \emptyset\Rightarrow x M y \subseteq M \Rightarrow x u y \in M. \]
And
\[ x u y \in  x M y \cap M \Rightarrow x M y \subseteq M \Rightarrow x 1 y = x y \in M. \]

(C) $\Rightarrow$ (F)

For $x, y \in A$, if there exists an element $u \in M$ such that $x u y \in M$ then, for every $v \in M$,
\[ x u y \in M \Leftrightarrow xy \in M \Leftrightarrow x v y \in M\]
that is $x M y \subseteq M$.}
\end{remark}

Note that kernels need not coincide with normal subobjects in non-exact categories like for example the category of torsion free abelian groups (see e.g. \cite{Gran}).

\subsection{The syntactic preorder}

Every subset $M$ of a monoid $A$ induces an internal reflexive and transitive relation $\leq_M$ on $A$ called the syntactic preorder (see \cite{P}). It is defined as $a \leq_M b$ if for every $x, y \in A$
\[ x a y \in M \Rightarrow x b y \in M.\]
This binary relation is reflexive, transitive and compatible with the monoid operation. Its positive cone consists of all  $a \in A$ such that, for every $x, y \in A$,
\[ x y \in M \Rightarrow x a y \in M,\]
which implies that it is a submonoid of $A$.

\vspace{1mm}

\begin{proposition}\label{prop:M preorder} Let $M$ be a submonoid of a monoid $A$. Then $M$ is the positive cone of some preorder in $A$ if and only if the following condition holds

(P) For every $x, y \in A$ and every $u \in M$.
\[ xy \in M \Rightarrow xuy \in M.\]

In this case, $M$ is the positive cone of $\;\leq_M$.
\end{proposition}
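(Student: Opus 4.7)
The plan is to mirror closely the proof of Proposition \ref{prop: M congruence}, adapting every step from a two-sided implication (characterizing a congruence class) to a one-sided implication (characterizing a positive cone). The positive cone of a preorder $\leq$ on $A$ is the submonoid $\{a\in A\mid 1\leq a\}$, and for the syntactic preorder $\leq_M$ this unfolds to $\{a\in A\mid \forall x,y\in A,\; xy\in M\Rightarrow xay\in M\}$, which is literally the set of elements satisfying condition (P). So the strategy is to prove the two inclusions ``$M$ = positive cone of $\leq_M$'' under (P), and then to derive (P) from the mere existence of any compatible preorder having $M$ as its positive cone.

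For the easy inclusion I would first observe, as in the congruence case, that the positive cone of $\leq_M$ is always contained in $M$: specializing $x=y=1$ in the defining implication yields $1\in M\Rightarrow a\in M$, and since $M$ is a submonoid we have $1\in M$. Then, assuming (P), the reverse inclusion $M\subseteq[1]_{\leq_M}$ is immediate, because (P) is precisely the statement that every $u\in M$ satisfies $1\leq_M u$. This settles the ``if'' direction, giving simultaneously the final claim that $M$ coincides with the positive cone of $\leq_M$.

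For the ``only if'' direction, suppose $M=\{a\in A\mid 1\leq a\}$ for some internal preorder $\leq$ on $A$. Given $u\in M$ and $x,y\in A$ with $xy\in M$, I would combine reflexivity, compatibility and transitivity as follows: from $u\in M$ we get $1\leq u$, so by compatibility with the monoid operation $x\cdot 1\cdot y\leq xuy$, i.e.\ $xy\leq xuy$; from $xy\in M$ we get $1\leq xy$; transitivity then yields $1\leq xuy$, i.e.\ $xuy\in M$. This is exactly condition (P).

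I do not expect any serious obstacle here; the argument is strictly a ``one-sided'' analogue of the congruence case, and the only small point to keep in mind is that we never use antisymmetry, only reflexivity, transitivity and compatibility with multiplication, which is exactly what ``internal preorder'' provides. The delicate contrast with Proposition \ref{prop: M congruence} is simply that here we can only propagate membership in $M$ from $xy$ to $xuy$ and not back, which matches the weaker hypothesis (P) replacing the biconditional (C).
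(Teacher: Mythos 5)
Your proposal is correct and is precisely the argument the paper intends: its own proof just says the result is "analogous to the previous proposition," and your write-up carries out that analogy step by step (positive cone of $\leq_M$ contained in $M$ via $x=y=1$; condition (P) as the literal statement $1\leq_M u$ for $u\in M$; and reflexivity, compatibility and transitivity of an arbitrary preorder with positive cone $M$ yielding (P)). No gaps.
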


\begin{proof} 
The proof is analogous to the one of the previous proposition and, in the same way, we conclude that
 $M$ is the zero class of some preorder on $A$ if and only if it is the zero class of its associated $M$-preorder.
\end{proof}

Example \ref{eg:8} in Section \ref{sec:examples} shows that neither of the above syntactic constructions is functorial.


\vspace{.5cm}

Let us now investigate the case of clots, that is the zero-classes of internal reflexive relations.

\subsection{The syntactic reflexive relation}


Every submonoid $M$ of a monoid $A$ induces a reflexive relation $R_M$ in $A$ defined by
$a R_M b$ if for every $x,y\in A$, \[ x a y=1\Rightarrow x b y\in M.\]

As we show next there is a large class of monoids where $R_M$ is an internal reflexive relation, in which case we call it the  syntactic {\em $M$-reflexive relation} of the submonoid $M$. We point out that we don't know if this is the case for all monoids.


\begin{proposition}\label{prop: M reflexive} Let $M$ be a submomoid of a monoid $A$. Then $M$ is the zero-class of the reflexive relation $R_M$
if and only if the following condition holds

(R) For every $x, y \in A$ and every $u \in M$
\[ xy = 1 \Rightarrow xuy \in M.\]
If $M$ is a clot then $M= [1]_{R_M}$ and the converse holds when the relation $R_M$ is internal.
\end{proposition}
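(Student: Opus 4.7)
The plan is to mirror the structure of the proofs of Propositions~\ref{prop: M congruence} and~\ref{prop:M preorder}, first establishing the equivalence between $M=[1]_{R_M}$ and condition (R), and then handling the clot characterization as a short addendum.

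First I would unwind the definition of $R_M$ on the zero-class: the element $u\in A$ belongs to $[1]_{R_M}$ exactly when $1\,R_M\,u$, i.e.\ when $xy=1$ implies $xuy\in M$ for all $x,y\in A$. The inclusion $[1]_{R_M}\subseteq M$ is then immediate by specialising $x=y=1$, which forces $u\in M$. For the reverse inclusion, note that ``$u\in M$ implies $u\in[1]_{R_M}$ for every $u\in M$'' is, by the unfolding just done, literally condition (R). Hence $M=[1]_{R_M}$ if and only if (R) holds, irrespective of whether $R_M$ happens to be internal.

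For the second statement I would argue as follows. Suppose $M$ is a clot, i.e.\ $M=[1]_R$ for some internal reflexive relation $R$ on $A$. Internality means that $R\subseteq A\times A$ is a submonoid, and reflexivity gives $(x,x),(y,y)\in R$ for all $x,y$. Given $u\in M$ and $xy=1$, we combine $(1,u)\in R$ with the pairs $(x,x)$ and $(y,y)$ in the product monoid to obtain
\[
(x,x)\cdot(1,u)\cdot(y,y)\;=\;(xy,\,xuy)\;=\;(1,\,xuy)\in R,
\]
so $xuy\in[1]_R=M$. This is condition (R), and the first part of the proof then yields $M=[1]_{R_M}$. Conversely, if $R_M$ is an internal relation and $M=[1]_{R_M}$, then $M$ is by definition the zero-class of an internal reflexive relation, hence a clot.

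I do not expect a serious obstacle: the argument is essentially syntactic, and the only subtle point is keeping track of which step actually uses internality. Concretely, the equivalence $M=[1]_{R_M}\Leftrightarrow(\mathrm{R})$ is purely about the function $M\mapsto R_M$ and needs no hypothesis, while internality is used only in the converse of the clot statement, where it is required in order to interpret $R_M$ itself as witnessing that $M$ is a clot.
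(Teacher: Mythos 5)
Your proposal is correct and follows essentially the same route as the paper: the equivalence $M=[1]_{R_M}\Leftrightarrow(\mathrm{R})$ is obtained by specialising $x=y=1$ for one inclusion and by unfolding the definition for the other, and the clot direction uses reflexivity plus compatibility of an internal relation $S$ to get $(1,xuy)\in S$ from $(1,u)\in S$ and $xy=1$, exactly as in the paper's proof. Your write-up is in fact slightly more explicit than the paper's about where internality is (and is not) used, which is a welcome clarification rather than a deviation.
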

\begin{proof} Like in the previous cases, we have that $[1]_{R_M}$ is a subset of $M$ and
 that $M \subseteq [1]_{R_M}$ if and only if condition (R) is satisfied. Consequently, in presence of condition (R), we have that
 $M= [1]_{R_M}$.

Let $M$ be the zero class of some internal reflexive relation $S$  on $A$.  If $1 S u$, by the reflexibility and compatibility of the relation, $ x y S x u y$. Then, if  $x y = 1 $ we have that  $1 S x u y $, that is $x u y \in M$, (R) holds and so $M \subseteq [1]_{R_M}$.

The converse is true if $R_M$ is compatible.
\end{proof}

The relation $R_M$  is compatible with the monoid operation as soon as the following extra condition is
satisfied
\begin{equation}\label{eq: (*) extra condition normal reflexive}
\forall x,y,s,t\in A, (xy=1, xs\in M, ty\in M) \Rightarrow ts\in M
\end{equation}
This follows from the condition saying that every left (or right) invertible element in $A$ is invertible:
\begin{equation}
\forall x,y\in A, (xy=1 \Rightarrow yx=1)
\end{equation}
that characterizes the Dedekind finite monoids, also called Von Neumann finite or directly finite monoids (See \cite{F}).
 Dedekind finite monoids includes groups, commutative monoids, finite monoids, cancellative monoids and others but not all monoids. For example the monoid $\Set(X, X)$ for any infinite set $X$ with respect to composition of functions as well as the bicyclic monoid are not Dedekind finite.

In Dedekind finite monoids
from $xs,ty\in M$ we get $tyxs\in M$ and, if $yx=1$ as soon as $xy=1$, then we conclude $ts\in M$.

The condition of being Dedekind finite is stronger than Condition ($\ref{eq: (*) extra condition normal reflexive}$) which is not a necessary  one for compatibility of $R_M$ as Example \ref{eg:9} shows (see Section \ref{sec:examples}).

\begin{remark}

{\em In the category of groups, condition (R) means that M is a normal subgroup of the group $A$
and the congruence $R_M$ coincides with the classical one induced by the normal subgroup M, that is

$$a\sim b  \Leftrightarrow a b^{-1} \in M.$$

We recall that $\sim$ is an equivalence relation on the underlying set of $A$ if and only if $M$ is a subgroup and that it is a congruence if and only if $M$
is a normal subgroup of $A$.

}
\end{remark}

\section{Left adjoints to the normalization functors}\label{sec:3}

In this section we are going to define functors $F \colon \Mono(\Mon) \to \A$ which are left adjoints to the corresponding normalization functors, as examples of a construction in a more general context.

Let $\C$ be a pointed, well-powered and finitely complete category with arbitrary intersections, that is with generalized pullbacks of monomorphisms.

By $\A =\tau(\C)$ we denote the category of internal binary relations of a fixed type $\tau$ on $\C$-objects with morphisms the morphisms in $\C$ compatible with the relations. In addition, we assume that such relations are stable under pullback and closed under arbitrary intersections for every object of $\C$, that is, we assume that the functor  $(\; \;)_0 \colon \tau(\C) \to \C$ that assigns to each relation its base object is a fibration and that each fiber $\tau(\C)_A$ is a complete lattice. So, in particular, we assume that, for each object $A$ in $\C$, the indiscrete relation $\nabla_A$ belongs to $\tau(\C)$.

\begin{definition}\label{def:normal mono} A monomorphism $m \colon M \to A$ is said to be normal w.r.t. $\tau(\C)$ if it is part of a pullback in $\C$
\[
\xymatrix{M \ar[r]^{u} \ar[d]_{m} & R\ar[d]^{r}\\ A\ar[r]_(.4){\langle 0, 1 \rangle} & A \times A}
\]
for a relation $(R, r)$ of type $\tau$ on $A$.
\end{definition}

It is easy to prove that such morphisms are stable under pullback when the same holds for the relations $\tau$. 
The above definition includes clots, positive cones and normal subobjects in the classical sense in any pointed variety but makes sense in every non-varietal category satisfying the prescribed conditions,
defining there the full subcategory $\NMono(\C,\tau)$ of the category $\Mono(\C)$ consisting of what we call the {\em normal monomorphisms with respect to $\tau$}.

 We point out that the categorical notion of normal monomorphism with respect to a congruence was introduced in \cite{B}. 

For each $\tau$, the normalization functor $$N \colon \tau(\C) \to \Mono(\C)$$ is defined on objects by $N(A, R) = m$ as in the diagram of Definition \ref{def:normal mono}. Indeed, if $r_i = p_i \cdot r$
with $p_i$ the direct product projections for i = 1, 2, then $ u $ is the kernel of $r_1$ and $m = r_2 \cdot u$, the classical definition of the normalization functor \cite{B}.
The universal property of pullbacks provides the definition of $N$ on morphisms.

\begin{theorem}\label{thm:1} Let $\C$ be a pointed, well-powered and finitely complete category with arbitrary intersections and  $\tau(\C)$ be a pullback-stable full subcategory of the one of internal relations on $\C$ closed under arbitrary intersections.
Then the normalization functor $N \colon \tau(\C) \to \Mono(\C)$ has a left adjoint $$F\colon \Mono(\C) \to \tau(\C)$$ defined on objects by $F( m \colon M \to A) = (R, r)$ with  $(R, r) = \cap{\S}_m$, where ${\S}_m$ is the set of all $\tau$-relations on $A$  through which $\langle 0, 1\rangle m$ factors.
\end{theorem}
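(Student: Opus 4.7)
My strategy is to prove the adjunction by constructing the unit $\eta_m$ explicitly and verifying its universal property, which simultaneously delivers functoriality of $F$ and naturality of the bijection for free.

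First I would confirm that $F(m)=\bigcap\S_m$ is a legitimate object of $\tau(\C)$. The set $\S_m$ is non-empty because the indiscrete relation $\nabla_A$ belongs to $\tau(\C)$ and $\langle 0,1\rangle\cdot m$ vacuously factors through it. By the assumed closure of each fibre $\tau(\C)_A$ under arbitrary intersections, the object $R:=\bigcap\S_m$ exists in $\tau(\C)$ with a canonical monomorphism $r:R\to A\times A$, and the universal property of intersections supplies a factorization $u:M\to R$ of $\langle 0,1\rangle m$. Pulling back $r$ along $\langle 0,1\rangle$ yields the normalization $NF(m):K\to A$ and, by the pullback's universal property, a unique $\bar u:M\to K$ with $NF(m)\cdot\bar u = m$. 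The pair $(\bar u, 1_A)$ will serve as the candidate unit $\eta_m$.

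Next I would verify that $\eta_m$ is a universal arrow from $m$ to $N$. Given any $(g,f):m\to N(A',R')$ in $\Mono(\C)$, the only lift to $\tau(\C)$ must have base arrow $f:A\to A'$, so the entire content of the universal property reduces to showing that $f$ is a morphism of relations, i.e.\ that $\langle f,f\rangle\cdot r$ factors through $r'$. This is the heart of the argument and relies on pullback stability: form the pullback $f^{*}(R')$ of $r'$ along $\langle f,f\rangle$, which by hypothesis lies in $\tau(\C)_A$. Unwinding the definition of $N$, the equation $N(A',R')\cdot g = f\cdot m$ amounts to the statement that $\langle 0,fm\rangle = \langle f,f\rangle\langle 0,1\rangle m$ factors through $r'$, equivalently that $\langle 0,1\rangle m$ factors through $f^{*}(R')$. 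Hence $f^{*}(R')\in\S_m$ and, by minimality of the intersection, $R\subseteq f^{*}(R')$, which is exactly the compatibility required. Uniqueness of the lift is automatic, since a morphism in $\tau(\C)$ is determined by its underlying base arrow in $\C$.

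The main subtlety will be keeping the two distinct pullbacks apart: one defines the normalization functor $N$ from $r'$, while the other, exploiting pullback stability of $\tau(\C)$, produces the relation $f^{*}(R')$ needed to apply the minimality of $R$. Once this bookkeeping is clear, functoriality of $F$ and naturality of the adjunction bijection follow formally from the universal property of $\eta_m$, so no further verification is required.
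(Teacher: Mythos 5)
Your proposal is correct and follows essentially the same route as the paper: the key step in both is to pull the target relation back along $f\times f$, observe that the resulting relation lies in $\S_m$, and invoke minimality of the intersection to obtain the required factorization. You organize this as a direct verification of the universal property of the unit (which the paper leaves implicit after defining $F$ on morphisms), but the underlying argument is the same; only note that the map you write as $\langle f,f\rangle$ should be $f\times f$.
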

\begin{proof}
 We define $F(m) = (R, r)= \cap{\S}_m$, where $\S_m$ is not empty because it contains $\nabla_A$ since $\tau(\C)$ is closed under arbitrary intersections.

The fact that the relations $\tau$ on $\C$ are stable under pullback, enables us to assign to each $(f, \overline{f})\colon  m \to m'$ a morphism
$$(f \times f, g) \colon F(m)=(R, r) \to F(m')= (R', r')$$ where $g \colon  R \to R'$ is the morphism $t \cdot l$, $(t, \overline{r'})$ being the pullback of $ r'$ along $f \times f$, which gives the relation $({\overline{R'}}, \overline{r'})$ that belongs to ${\S}_m$ and so there exists a morphism $l$ such that
$\overline{r'} \cdot l = r$:

 \begin{equation}\label{diag:A}
    \xymatrix{R\ar[r]^{l}\ar[d]_{r}&\overline{R'}\ar[d]^{\overline{r'}}\ar[r]^{t}& R'\ar[d]^{r'}\\
    A\times A \ar@{=}[r] & A\times A \ar[r]^{f\times f} & A'\times A'}
\end{equation}

This defines a functor $F$ which, in addition, is left adjoint of $N$ with unit $\eta_m= (1_A, s) \colon m \to NF(m)$
\begin{equation}\label{diag:B}
    \xymatrix{M\ar[rr]^{u}\ar[dd]_{m}\ar@{-->}[rd]^{s}&& R\ar[dd]^{F(m)}\\
    & P \ar[ur]^{}\ar[ld]^{NF(m)} \\
     A \ar[rr]_{\langle 0,1 \rangle} && A\times A}
\end{equation}
where $P$ is the pullback of $F(m)$ along $\langle 0, 1\rangle$.
\end{proof}

In the case of congruences in Barr-exact categories there is another way to describe the left adjoint to the normalization functor.

\begin{corollary} Let $\C$ be a Barr-exact category and $\tau(\C)$ denote the category $\Eq(\C)$ of internal equivalence relations on $\C$. 
Then
$F(m)$ is the kernel pair of the cokernel of $m$.
\end{corollary}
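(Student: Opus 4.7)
The plan is to show that the kernel pair $K$ of the cokernel $q\colon A\to Q$ of $m$ is the minimal equivalence relation on $A$ through which $\langle 0, 1\rangle m$ factors, which by Theorem~\ref{thm:1} identifies $K$ with $F(m)$. Before doing this I would confirm that the cokernel of $m$ actually exists in this setting: Theorem~\ref{thm:1} produces the equivalence relation $F(m)$ on $A$, and Barr-exactness makes $F(m)$ effective, so its coequalizer exists and easily verifies the universal property of the cokernel of $m$ (any $c\colon A\to B$ with $c\cdot m = 0$ has a kernel pair belonging to $\S_m$, hence bounded below by $F(m)$, and so factors uniquely through that coequalizer).

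With $q\colon A\to Q$ in hand, I would check the two containments. For $F(m)\leq K$: since $q\cdot m = 0 = q\cdot 0$, the map $\langle 0, 1\rangle m\colon M\to A\times A$ factors through $K$, so $K\in \S_m$ and therefore $F(m) = \cap\,\S_m \leq K$. For the reverse inequality, take any $(R,r)\in \S_m$; by Barr-exactness $R$ is the kernel pair of its coequalizer $c\colon A\to A/R$, and the assumed factorization of $\langle 0, 1\rangle m$ through $r$ forces $c\cdot m = c\cdot 0 = 0$. The universal property of the cokernel then produces $\bar c\colon Q\to A/R$ with $c = \bar c\cdot q$, whence the kernel pair of $q$ is contained in the kernel pair of $c$, i.e.\ $K\leq R$. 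Intersecting over all such $R$ yields $K\leq F(m)$, and the two inequalities give the desired equality.

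The only place Barr-exactness is really doing work is the identification of every equivalence relation in $\S_m$ with a kernel pair, which is what converts the intrinsic description $F(m) = \cap\,\S_m$ into the extrinsic one via the cokernel. This is also the only potential obstacle: if cokernels are not assumed a priori in $\C$, they are supplied precisely by the Barr-exact quotient of the effective equivalence relation $F(m)$ itself, so no hypothesis beyond those of Theorem~\ref{thm:1} together with Barr-exactness is needed for the statement to be both meaningful and true.
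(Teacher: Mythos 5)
Your argument is correct and follows essentially the same route as the paper: both exploit Barr-exactness to identify $F(m)$ with the kernel pair of its own coequalizer $q$, and both hinge on the observation that any $f$ with $f\cdot m=0$ has its kernel pair in $\S_m$, hence above $F(m)$, which shows $q$ is the cokernel of $m$. Your explicit two-containment formulation is just a slightly more detailed packaging of the same proof.
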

\begin{proof}  If $F(m) = (R, r)$ and $(R, r_1, r_2)$ is the kernel pair of some morphism in $\C$ then it is the kernel pair of the coequalizer $q \colon A \to B$  of $r_1, r_2 \colon R \to A$ and so, up to isomorphism,  $(R, r_1, r_2)$ coincides with $(A \times_B A, \pi_1, \pi_2)$.

It remains to prove that $q$ is the cokernel of $m$, that is, it is the coequalizer of $(m, 0)$. If $f m = f 0$ then we have that
the kernel pair of $f$ belongs to $S_m$ and so we conclude that $f r_1 = f  r_2$. Consequently, there is a unique morphism $f'$ such that $f' q = f$. 
\end{proof}


Recall that an internal binary relation $(R, r_1, r_2)$ on an object $A$ of $\C$ is
\begin{itemize}
\item {\it reflexive} if there exists a morphism $e\colon A \to A$ such that $r_1 e = r_2  e = 1_A$, that is, if $1_A \leq R$.
\item {\it symmetric}  if there exists a morphism $s\colon R \to R $ such that $r_1  s = r_2$ and $r_2  s = r_1$, that is, if $R^o \leq R$.
\item {\it transitive}  if there exists a morphism $t\colon R \times_A R \to R$ such that $r_1  t = r_1  \pi_1$ and $r_2  t = r_2  \pi_2$, that is, if $RR \leq R$ whenever $\C$ is a regular category.
\end{itemize}

If, in Theorem \ref{thm:1},  $\tau$ denotes reflexive relations, preorders or equivalence relations in $\C$ (but not orders, because the existence of the order $\nabla_A$ would imply that $A$ is singular)  then $\tau(\C)$ satisfy the prescribed conditions and so we obtain adjunctions
\begin{eqnarray}
\xymatrix{\RRel(\C)\ar@<.5ex>[r]^{N}&\Mono(\C)\ar@<.5ex>[l]^{R}}\\
\xymatrix{\Ord(\C)\ar@<.5ex>[r]^{N}&\Mono(\C)\ar@<.5ex>[l]^{P}}\\
\xymatrix{\Eq(\C)\ar@<.5ex>[r]^{N}&\Mono(\C)\ar@<.5ex>[l]^{E}}
\end{eqnarray}
denoting the functor $F$ by $R, P$ and $E$, respectively. This is the case when $\C = \Mon$. 

\vspace{2mm}

Whenever $\C$ is a pointed variety we have another classical procedure to define these left adjoints that consists of assigning to each
monomorphism $m\colon{M\to A}$, that, without lost of generality, we assume to be an inclusion, the smallest zero-class in $A$ that contains $M$,
by describing
appropriate submonoids of $A \times A$. The explicit descriptions of such submonoids, that we present next, enable us to better exhibit different aspects of the zero classes of each of the internal relations 
as shown in Example \ref{eg:2} of Section \ref{sec:examples}.

\vspace{2mm}

In the rest of this section, for simplicity of exposition, we use additive notation but we do not assume that the monoids are commutative.

By $R=R(m)$ we denote the smallest internal reflexive relation on a monoid $A$ whose zero class contains $M$, that is, the submonoid of $A \times A$ generated by the set $(\{0\} \times M) \cup  \Delta_A $. This means that $x R y$ if and only if there exists a natural number $n\in \mathbb{N}$ together with sequences $a_i\in A$ and $u_i\in M$, with $i=1,\ldots,n$, such that
\begin{equation}
x=a_1 + \ldots + a_n\text{ and } y=a_1+ u_1+ \ldots +a_n + u_n.
\end{equation}

The relation $P=P(m)$ is the smallest reflexive and transitive relation (a preorder) on $A$ whose zero class contains all the elements in $M$. In this case it is the transitive closure of the above internal reflexive relation which is again internal, as it is easy to check. We conclude that $x P y$ if and only if there exist natural numbers $m,n\in \mathbb{N}$ together with double indexed sequences $a_{ij}\in A$ and $u_{ij}\in M$, with  $i=1,\ldots,m$ and $j=1,\ldots,n$, such that
\begin{eqnarray*}
x &=& \sum_{j=1}^{n} a_{1j}\\
\sum_{j=1}^{n} \left(a_{1j}+u_{1j}\right) &=& \sum_{j=1}^{n} a_{2j}\\
\end{eqnarray*}
\begin{eqnarray*}
&\cdots&
\end{eqnarray*}
\begin{eqnarray*}
\sum_{j=1}^{n} \left(a_{(i-1)j}+u_{(i-1)j}\right) &=& \sum_{j=1}^{n} a_{ij}
\end{eqnarray*}
\begin{eqnarray*}
&\cdots&
\end{eqnarray*}
\begin{eqnarray*}
\sum_{j=1}^{n} \left(a_{(m-1)j}+u_{(m-1)j}\right) &=& \sum_{j=1}^{n} a_{mj}\\
\sum_{j=1}^{n} \left(a_{mj}+u_{mj}\right) &=& y.
\end{eqnarray*}

\vspace{2mm}

The relation $E=E(m)$ is the smallest internal equivalence relation on $A$ whose zero class contains $M$.
We start by considering the submonoid of $A\times A$ generated by $(M\times M)\cup \Delta_A$ that is the smallest internal reflexive and symmetric relation whose zero-class contains $M$. Then the transitive closure of this relation gives $E$. 
Thus we have that $x E y$ if and only if there exist natural numbers $m,n\in \mathbb{N}$ together with double indexed sequences $a_{ij}\in A$ and $u_{ij},v_{ij}\in M$, with  $i=1,\ldots,m$ and $j=1,\ldots,n$, such that
\begin{eqnarray*}
x &=& \sum_{j=1}^{n} \left(a_{1j}+v_{1j}\right)\\\
\sum_{j=1}^{n} \left(a_{1j}+u_{1j}\right) &=& \sum_{j=1}^{n} \left(a_{2j}+v_{2j}\right)
\end{eqnarray*}
\begin{eqnarray*}
&\cdots&
\end{eqnarray*}
\begin{eqnarray*}
\sum_{j=1}^{n} \left(a_{(i-1)j}+u_{(i-1)j}\right) &=& \sum_{j=1}^{n} \left(a_{ij}+v_{ij}\right)
\end{eqnarray*}
\begin{eqnarray*}
&\cdots&
\end{eqnarray*}
\begin{eqnarray*}
\sum_{j=1}^{n} \left(a_{(m-1)j}+u_{(m-1)j}\right) &=& \sum_{j=1}^{n} \left(a_{mj}+v_{mj}\right)\\
\sum_{j=1}^{n} \left(a_{mj}+u_{mj}\right) &=& y.
\end{eqnarray*}

\begin{proposition} If we denote by $N$ the normalization functor from the categories of internal reflexive relations, preorders and equivalence relations in the category $\Mon$ of monoids, we have the following adjunctions
\begin{eqnarray}
\xymatrix{\RRel(\Mon)\ar@<.5ex>[r]^{N}&\Mono(\Mon)\ar@<.5ex>[l]^{R}}\label{eq:adjoint R}\\
\xymatrix{\Ord(\Mon)\ar@<.5ex>[r]^{N}&\Mono(\Mon)\ar@<.5ex>[l]^{P}}\\
\xymatrix{\Eq(\Mon)\ar@<.5ex>[r]^{N}&\Mono(\Mon)\ar@<.5ex>[l]^{E}}
\end{eqnarray}
where the functors $R$, $P$ and $E$ are defined on objects as described above.
\end{proposition}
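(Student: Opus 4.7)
The plan is to deduce the three adjunctions by invoking Theorem~\ref{thm:1} and then identifying its abstract left adjoint $\cap\S_m$ with the concrete submonoids $R(m)$, $P(m)$, $E(m)$ of $A\times A$ described above. The category $\Mon$ is pointed, well-powered, and complete with arbitrary intersections, and each of $\RRel(\Mon)$, $\Ord(\Mon)$, $\Eq(\Mon)$ is a full subcategory of internal relations that is pullback-stable and closed under arbitrary intersections (for the last two, the intersection of a family of transitive, resp.\ symmetric, submonoids is again transitive, resp.\ symmetric). Hence Theorem~\ref{thm:1} applies and yields, in each case, a left adjoint $F$ to the normalization functor $N$, sending $m\colon M\hookrightarrow A$ to the smallest internal $\tau$-relation on $A$ whose zero-class contains $M$. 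It remains to check that this smallest relation is exactly the one described by the formulas for $R(m)$, $P(m)$, $E(m)$.

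First I would verify that each of the three constructions is indeed an internal relation of the prescribed type on $A$. For $R(m)$ this is tautological, as it is defined as the submonoid of $A\times A$ generated by $(\{0\}\times M)\cup\Delta_A$, hence a submonoid containing the diagonal. For $P(m)$ and $E(m)$ the content is that the transitive closure of an internal reflexive (resp.\ reflexive–symmetric) relation in $\Mon$ is again a submonoid of $A\times A$: given two chains $x=c_{0}\, S\, c_{1}\, S\cdots S\, c_{m}=y$ and $x'=c'_{0}\,S\,c'_{1}\,S\cdots S\,c'_{m'}=y'$, pad the shorter chain by repeating entries (allowed by reflexivity) to get a common length, and then add the two chains componentwise; compatibility of $S$ with the monoid operation shows that the result is still a valid chain from $x+x'$ to $y+y'$. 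This padding is precisely what the double-indexed formulas above encode.

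Second, I would check that the zero-class of each constructed relation contains $M$. For $R(m)$, take $n=1$, $a_1=0$, $u_1=u$ for $u\in M$; for $P(m)$ and $E(m)$, take $m=n=1$ with $a_{11}=0$ (and $v_{11}=0$ in the equivalence case). Third, and most importantly, I would prove minimality: if $(S,s_1,s_2)$ is any internal $\tau$-relation on $A$ with $M\subseteq [0]_S$, then $0\,S\,u$ for every $u\in M$; compatibility of $S$ then gives $a_i\,S\,(a_i+u_i)$, and adding over $i=1,\dots,n$ shows $(a_1+\cdots+a_n)\,S\,(a_1+u_1+\cdots+a_n+u_n)$, so $R(m)\subseteq S$. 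In the preorder case, transitivity of $S$ additionally absorbs the chains indexed by $i=1,\dots,m$, giving $P(m)\subseteq S$. In the equivalence case, symmetry yields $u\,S\,0$ as well, so the generating set $(M\times M)\cup\Delta_A$ lies in $S$, and transitivity finishes the inclusion $E(m)\subseteq S$.

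Combined, these steps show that in each case the explicit construction on objects agrees with the abstract $\cap\S_m$ from Theorem~\ref{thm:1}; functoriality, the unit, and the counit are then inherited from that theorem, establishing the three adjunctions \eqref{eq:adjoint R}--(8). The main technical obstacle I foresee is the verification that the transitive closure stays a submonoid of $A\times A$, i.e.\ the padding-and-adding argument above; all other steps are routine once minimality is formulated in terms of chains.
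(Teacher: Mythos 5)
Your proposal is correct and follows essentially the route the paper intends: the proposition is a specialization of Theorem~\ref{thm:1} to $\C=\Mon$, and the only content to check is that the explicit submonoids $R(m)$, $P(m)$, $E(m)$ of $A\times A$ realize the smallest internal relation of the respective type whose zero-class contains $M$, which is exactly your verification of internality (including the padding argument for the transitive closure, which the paper dismisses as ``easy to check''), containment of $M$ in the zero-class, and minimality via compatibility and transitivity. The paper leaves these details implicit, so your write-up is a faithful expansion of its argument rather than a different proof.
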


The largest equivalence $\Fix (\varepsilon) \sim \Fix (\eta)$ induced by each of these adjunctions, consists of  $\Fix (\varepsilon)$, the category of all relations that are generated by their zero classes in the sense that they are the smallest relations with these zero-classes, and $\Fix (\eta)$, the category of all the corresponding zero classes. In the  three particular cases of our study, $\Fix (\eta)$ consist of all clots, all positive cones and all normal submonoids.

Let us denote by $\Fix(\eta_R)$, $\Fix(\eta_P)$ and $\Fix(\eta_E)$ the full subcategories of $\Mono(\Mon)$  consisting of all monomorphisms $m\colon{M\to A}$ such that the corresponding adjunction unit $\eta_m$ is an isomorphism. Similarly, let us denote by $\Fix(\varepsilon_E)$, $\Fix(\varepsilon_P)$ and $\Fix(\varepsilon_R)$ the respective subcategories of relations that are determined by their respective zero-classes.  

The following theorems sum up our observations with respect to normal monomorphism relative to the internal binary relations we are dealing with in $\Mon$.  There, for every monomorphism $m\colon M \to A$ we always identify $m(M)$ with $M$.

\renewcommand{\theenumi}{\alph{enumi}}

\begin{theorem}[Normal subobject]\label{thm:E}
A monoid monomorphism $m\colon{M\to A}$ is a normal subobject if one, and so all, of the following equivalent conditions is satisfied:
\begin{enumerate}
\item $M$ is the zero class of some congruence;
\item $M$ is the zero-class of its syntactic congruence;
\item $M$ is the zero-class of the congruence $E(m)$;
\item $M$ is the zero-class of the smallest congruence whose zero-class contains $M$;
\item $m$ belongs to $\Fix(\eta_E)$, that is, $m$ is normal w.r.t. $\Eq(\Mon)$;
\item $E(m)$ belongs to $\Fix(\varepsilon_E)$;
\item for every $x, y \in A$ and every $u \in M$.
\[ xy \in M \Leftrightarrow xuy \in M.\]
\item for every natural numbers $m,n\in\mathbb{N}$ and every double indexed sequences $a_{ij}\in A$, $u_{ij},v_{ij}\in M$, with $i=1,\ldots,m$ and $j=1,\ldots,n$, if $$0 = a_{11}+v_{11}+\ldots+a_{1n}+v_{1n}$$ and
\begin{eqnarray*}
 a_{11}+u_{11}+\ldots +a_{1n}+u_{1n} &=& a_{21}+v_{21}+\ldots+a_{2n}+v_{21}\\
 &\cdots&\\
 a_{(i-1)1}+u_{(i-1)1}+\ldots +a_{(i-1)n}+u_{(i-1)n} &=& a_{i1}+v_{i1}+\ldots+a_{in}+v_{in}\\
 &\cdots&\\
  a_{(m-1)1}+u_{(m-1)1}+\ldots +a_{(m-1)n}+u_{(m-1)n} &=& a_{m1}+v_{m1}+\ldots+a_{mn}+v_{mn}
\end{eqnarray*}
 then $$a_{m1}+u_{m1}+\ldots +a_{mn}+u_{mn}\in M.$$
\end{enumerate}
\end{theorem}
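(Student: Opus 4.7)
The plan is to funnel all eight conditions to the single core statement (c), leaning on three tools: Proposition \ref{prop: M congruence}, the universal property of $E$ from Theorem \ref{thm:1}, and the explicit description of $E(m)$ recalled just before the theorem. The triple (a) $\Leftrightarrow$ (b) $\Leftrightarrow$ (g) is precisely Proposition \ref{prop: M congruence} together with its ``in this case'' clause identifying $M$ with $[1]_{\sim_M}$. Conditions (c) and (d) are two phrasings of the same assertion, since by Theorem \ref{thm:1} specialised to $\tau = \Eq$ the relation $E(m)$ \emph{is} the smallest congruence on $A$ whose zero class contains $M$. Condition (e) is Definition \ref{def:normal mono} with $\tau = \Eq$: the pullback along $\langle 0, 1 \rangle$ computes the zero class of the chosen equivalence relation.

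The bridge from (c) to (a) and (e) is a minimality argument. The implications (c) $\Rightarrow$ (a) and (c) $\Rightarrow$ (e) follow by taking $R = E(m)$. Conversely, if $M = [0]_R$ for some congruence $R$ on $A$, the universal property of $E(m)$ yields $E(m) \leq R$, hence $[0]_{E(m)} \subseteq [0]_R = M$; combined with the tautological reverse inclusion $M \subseteq [0]_{E(m)}$ this is exactly (c).

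For (f) I would compute $\varepsilon_{E(m)}$ directly. Writing $M' = [0]_{E(m)}$ and $m' \colon M' \hookrightarrow A$ for the inclusion, two applications of the minimality of $E$ give $E(m) \leq E(m')$ (since $M \subseteq M'$ makes the family defining $E(m')$ a subfamily of the one defining $E(m)$) and $E(m') \leq E(m)$ (since $E(m)$ itself, having zero class exactly $M'$, belongs to the family defining $E(m')$). Hence $E(m) = EN(E(m))$, which is precisely the statement that $\varepsilon_{E(m)}$ is an isomorphism, so (f) is trivially implied by (c).

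Finally, (h) is the concrete unpacking of (c): putting $x = 0$, i.e.\ $a_{11}+v_{11}+\cdots+a_{1n}+v_{1n} = 0$, in the description of $E(m)$ recalled just before the theorem reproduces exactly the hypothesis of (h), while the conclusion $a_{m1}+u_{m1}+\cdots+a_{mn}+u_{mn} \in M$ says $[0]_{E(m)} \subseteq M$; combined with the tautological reverse inclusion this is again (c). The only delicate point in the whole argument is bookkeeping: one must keep $M$ and $[0]_{E(m)}$ notationally distinct throughout, since the entire content of the theorem is the claim that they coincide precisely under each of the listed conditions.
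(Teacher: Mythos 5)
Your overall strategy --- funnelling everything through (c) via Proposition \ref{prop: M congruence}, the minimality of $E(m)$ from Theorem \ref{thm:1}, and the explicit description of $E(m)$ given just before the theorem --- is exactly the route the paper intends (the theorem is stated there as a summary of Section 2, Theorem \ref{thm:1} and the discussion of the induced equivalence, with no separate written proof). Your treatment of (a), (b), (g) via Proposition \ref{prop: M congruence}, of (c), (d), (e) via the minimality of $E(m)$, and of (h) as the elementwise unpacking of $[0]_{E(m)}\subseteq M$ is correct.

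The gap is at condition (f). Your argument establishes $EN(E(m))=E(m)$, but observe that it never uses hypothesis (c): both inequalities $E(m)\leq E(m')$ and $E(m')\leq E(m)$ hold for \emph{every} monomorphism $m$, because the adjunction $E\dashv N$ is idempotent (you are in effect verifying $NEN\cong N$, whence $\varepsilon E$ is invertible). So what you have actually shown is that $E(m)$ lies in $\Fix(\varepsilon_E)$ unconditionally; as a result you have not proved ``(c) $\Rightarrow$ (f)'' as a substantive implication, and --- more seriously --- you supply no argument for the converse direction out of (f), which is needed to close the cycle of equivalences. Read literally as you read it, (f) would be vacuously true and therefore could not be equivalent to the other conditions for a non-normal $M$. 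The repair is to interpret (f) as saying that the pair $(m,E(m))$ corresponds under the equivalence $\Fix(\eta_E)\sim\Fix(\varepsilon_E)$, i.e. that $E(m)\in\Fix(\varepsilon_E)$ \emph{and} $N(E(m))=m$; the second clause is literally (c), and the equivalence with (e) then follows from the general fact that $N$ and $E$ restrict to mutually inverse equivalences between $\Fix(\varepsilon_E)$ and $\Fix(\eta_E)$. In any case, the implication leading out of (f) is missing from your write-up and must be added.
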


\begin{theorem}[Positive cones]\label{thm:P}
A monoid monomorphism $m\colon{M\to A}$ is a positive cone if one, and so all, of the following equivalent conditions is satisfied:
\begin{enumerate}
\item $M$ is the zero-class of some preorder;
\item $M$ is the zero-class of its syntactic $M$-preorder;
\item $M$ is the zero-class of the preorder $P(m)$;
\item $M$ is the zero-class of the smallest preorder whose zero-class contains $M$;
\item $m$ belongs to $\Fix(\eta_P)$, that is, $m$ is normal w.r.t.  $\Ord(\Mon)$;
\item $P(m)$ belongs to $\Fix(\varepsilon_P)$;
\item\label{item:8 of Eq} for every $x, y \in A$ and every $u \in M$.
\[ xy \in M \Rightarrow xuy \in M.\]
\item\label{cond h of thm 3} for every natural numbers $m,n\in\mathbb{N}$ and every two double indexed sequences $a_{ij}\in A$, $u_{ij}\in M$, with $i=1,\ldots,m$ and $j=1,\ldots,n$, if $$0 = a_{11}+\ldots+a_{1n}$$ and
\begin{eqnarray*}
 a_{11}+u_{11}+\ldots +a_{1n}+u_{1n} &=& a_{21}+\ldots+a_{2n}\\
 &\cdots&\\
 a_{(i-1)1}+u_{(i-1)1}+\ldots +a_{(i-1)n}+u_{(i-1)n} &=& a_{i1}+\ldots+a_{in}\\
 &\cdots&\\
  a_{(m-1)1}+u_{(m-1)1}+\ldots +a_{(m-1)n}+u_{(m-1)n} &=& a_{m1}+\ldots+a_{mn}
\end{eqnarray*}
 then $$a_{m1}+u_{m1}+\ldots +a_{mn}+u_{mn}\in M.$$
\end{enumerate}
\end{theorem}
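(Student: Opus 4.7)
The plan is to establish the eight-way equivalence by assembling short, specific implications, each grounded in a structural result already developed in the paper. The unifying reading is that every one of (a)--(h) asserts, in a different language (syntactic, categorical, adjoint-theoretic, or combinatorial), that $M$ is the positive cone of $P(m)$.

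I begin with the syntactic equivalences (a)$\;\Leftrightarrow\;$(b)$\;\Leftrightarrow\;$(g), which are exactly Proposition \ref{prop:M preorder}: condition (g) is condition (P) there, and (b) is its conclusion that $M$ coincides with the positive cone of the syntactic preorder $\leq_M$. Next, (c)$\;\Leftrightarrow\;$(d) is essentially the definition of $P(m)$ preceding the theorem as the smallest preorder on $A$ whose positive cone contains $M$. For (a)$\;\Rightarrow\;$(c): if $M$ is the positive cone of some preorder $S$, then $S\in\S_m$, so $P(m)\leq S$ by minimality; consequently the positive cone of $P(m)$ is contained in that of $S$, which equals $M$, while the reverse inclusion holds by construction. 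The converse (c)$\;\Rightarrow\;$(a) is trivial, taking the witnessing preorder to be $P(m)$ itself. Finally, (h)$\;\Leftrightarrow\;$(c) is the concrete unfolding of (c): in the explicit description of $P(m)$ given in Section \ref{sec:3}, setting $x=0$ and $y=a$ yields precisely the chain of equations that appears in (h), so (h) says that the positive cone of $P(m)$ is contained in $M$, which, combined with the automatic reverse inclusion, is (c).

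It remains to treat the categorical conditions (e) and (f). Unfolding Definition \ref{def:normal mono} with the type $\tau$ being internal preorders, $m$ is normal with respect to $\Ord(\Mon)$ precisely when there exists a preorder $(R,r)$ on $A$ such that $M$ arises as the pullback of $\langle 0,1\rangle$ along $r$, i.e., as the positive cone of $R$, which is condition (a). The equivalence (e)$\;\Leftrightarrow\;$(f) is then the formal equivalence $\Fix(\eta_P)\simeq\Fix(\varepsilon_P)$ induced by the adjunction $P\dashv N$ of Theorem \ref{thm:1}: $m\in\Fix(\eta_P)$ if and only if $\varepsilon_{P(m)}$ is an isomorphism, which is exactly $P(m)\in\Fix(\varepsilon_P)$. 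The only step requiring genuine care beyond pure translation is this last adjoint-theoretic equivalence; everything else reduces to one-line verifications once the explicit formula for $P(m)$ and its built-in minimality are in play. The overall structure is an exact parallel of the proof of Theorem \ref{thm:E}, with preorders replacing equivalence relations throughout.
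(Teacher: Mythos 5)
Your assembly of the syntactic and combinatorial conditions is sound and follows the route the paper intends (the paper itself gives no detailed proof of Theorem~\ref{thm:P}; it presents it as a summary of Proposition~\ref{prop:M preorder} and of the explicit construction of $P(m)$ in Section~\ref{sec:3}). In particular (a)$\Leftrightarrow$(b)$\Leftrightarrow$(g) via Proposition~\ref{prop:M preorder}, (c)$\Leftrightarrow$(d) by the definition of $P(m)$, (a)$\Leftrightarrow$(c) by minimality of $\cap\,{\S}_m$, (h)$\Leftrightarrow$(c) by unfolding the chain description of $P(m)$ at $x=0$, and the identification of ``normal w.r.t.\ $\Ord(\Mon)$'' with (a) via Definition~\ref{def:normal mono} are all correct. (A small omission: for (e) you should also check directly that $\eta_m$ is invertible exactly when $M=[1]_{P(m)}$, which is immediate from diagram~(\ref{diag:B}); you only address the ``normal w.r.t.\ $\Ord(\Mon)$'' half of item (e).)

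The genuine gap is in (e)$\Leftrightarrow$(f). You assert that $m\in\Fix(\eta_P)$ \emph{if and only if} $\varepsilon_{P(m)}$ is an isomorphism, presenting this as the formal content of the adjunction $P\dashv N$. Only the forward implication is formal: the triangle identity $\varepsilon_{P(m)}\circ P(\eta_m)=1_{P(m)}$ shows $\eta_m$ iso $\Rightarrow$ $\varepsilon_{P(m)}$ iso, but the converse is not a consequence of the equivalence $\Fix(\eta_P)\simeq\Fix(\varepsilon_P)$. Worse, in this setting the converse direction cannot work as stated: $\varepsilon_{P(m)}$ is \emph{always} an isomorphism. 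Indeed, both $\varepsilon_{P(m)}$ and its section $P(\eta_m)$ live over $1_A$ and compare two subobjects of $A\times A$ in the poset fibre $\Ord(\Mon)_A$, hence each is an inclusion and the two inclusions force equality; concretely, any preorder whose positive cone contains $[1]_{P(m)}\supseteq M$ lies in ${\S}_m$ and therefore contains $P(m)$, so $P(m)$ is already the smallest preorder with positive cone containing $[1]_{P(m)}$. Thus condition (f), read literally as ``$\varepsilon_{P(m)}$ is invertible,'' holds for every monomorphism $m$ and cannot imply (a) (compare Example~\ref{eg:1}, where $M$ is not even a clot). To close the gap one must read (f) as saying that the pair $(m,P(m))$ is a fixed pair of the adjunction, i.e.\ that $P(m)$ is generated by its positive cone \emph{and} $N(P(m))=M$, and then prove (f)$\Rightarrow$(c) from the clause $N(P(m))=M$ rather than from the invertibility of the counit. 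As written, your argument for (f)$\Rightarrow$(e) would fail.
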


\begin{theorem}[Clots]\label{thm:R}
A monoid monomorphism $m\colon{M\to A}$ is a \emph{clot} if one, and so all, of the following equivalent conditions is satisfied:
\begin{enumerate}
\item $M$ is the zero-class of some internal reflexive relation;
\item $M$ is the zero-class of the reflexive relation $R(m)$;
\item $M$ is the zero-class of the smallest internal reflexive relation whose zero-class contains $M$;
\item $m$  belongs to $\Fix(\eta_R)$, that is, $m$ is normal w.r.t. $\RRel(\Mon)$;
\item $R(m)$ belongs to $\Fix(\varepsilon_R)$;
\item\label{cond f of thm 4} For every natural number $n\in\mathbb{N}$ and for every two sequences $a_i\in A$, $u_i\in M$, with $i=1,\ldots,n$, if $$a_1+\ldots+a_n=0$$ then $$a_1+u_1+\ldots +a_n+u_n\in M.$$
    \end{enumerate}
Furthermore, when $R_M$ is an internal relation (i.e., it is the syntactic $M$-reflexive relation) the above conditions are also equivalent to the following:
\begin{enumerate}
    \item[(g)] $M$ is the zero-class of its syntactic reflexive relation;
    \item[(h)] for every $x, y \in A$ and every $u \in M$.
\[ xy=1 \Rightarrow xuy \in M.\]
\end{enumerate}

\end{theorem}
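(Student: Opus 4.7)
The plan is to verify the equivalences among (a)--(f) in a small loop using the general machinery of Theorem \ref{thm:1} applied to $\tau = \RRel$, and then to attach (g) and (h) via Proposition \ref{prop: M reflexive} under the additional internality hypothesis. First, (a) $\Leftrightarrow$ (d) is literally Definition \ref{def:normal mono}: unpacking the pullback square for a reflexive relation $(R,r)$ on $A$ shows that $m$ fits into such a pullback precisely when $M$ coincides with the zero-class of $R$. Next, (b) and (c) say the same thing, since $R(m)$ was constructed earlier in Section \ref{sec:3} as the submonoid of $A\times A$ generated by $(\{0\}\times M)\cup \Delta_A$, i.e.\ as the smallest internal reflexive relation on $A$ containing $\{0\}\times M$, matching the general formula $F(m)=\cap\S_m$ of Theorem \ref{thm:1}. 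The equivalence (d) $\Leftrightarrow$ (b) then falls out of the adjunction $R \dashv N$: the unit $\eta_m$ in diagram (\ref{diag:B}) is an isomorphism exactly when the comparison $s\colon M \to P$ is one, and $P$ is by construction the zero-class of $R(m)$. Finally, (d) $\Leftrightarrow$ (e) is a formal consequence of the triangle identities, namely $m \in \Fix(\eta_R)$ iff $R(m) = F(m) \in \Fix(\varepsilon_R)$.

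For (f), the idea is to use the explicit generators description of $R(m)$ given in Section \ref{sec:3}: one has $x\, R(m)\, y$ iff $x=a_1+\cdots+a_n$ and $y=a_1+u_1+\cdots+a_n+u_n$ for some $a_i \in A$ and $u_i\in M$. Setting $x=0$ gives
\[
[0]_{R(m)}=\{a_1+u_1+\cdots+a_n+u_n \mid a_1+\cdots+a_n=0\}.
\]
The inclusion $M \subseteq [0]_{R(m)}$ is automatic (take $n=1$ and $a_1=0$), so equality amounts to the reverse inclusion, which is exactly (f). Hence (b) $\Leftrightarrow$ (f). For the supplementary statement, assuming $R_M$ is internal, Proposition \ref{prop: M reflexive} already establishes (g) $\Leftrightarrow$ (h) and, under internality, also (a) $\Leftrightarrow$ (g), which attaches the new conditions to the main chain.

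The main obstacle I expect is not a hard calculation but a conceptual one: verifying that the pullback $P$ appearing in the unit diagram (\ref{diag:B}) of Theorem \ref{thm:1}, specialized to $\tau=\RRel$ and $\C=\Mon$, genuinely coincides with the zero-class $[0]_{R(m)}$ of the generated relation. Once this identification is in place, each of the remaining implications is either definitional ((a)--(d)), a standard adjunction fact ((e)), or a direct unpacking of the element-level description of $R(m)$ ((f)); and the supplementary pair (g),(h) then requires nothing beyond citing Proposition \ref{prop: M reflexive}.
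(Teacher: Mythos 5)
The paper gives no explicit proof of this theorem --- it is presented as a summary of the observations of Sections 2 and 3 --- and your assembly (Definition \ref{def:normal mono} for (a)$\Leftrightarrow$(d), the adjunction of Theorem \ref{thm:1} for (b)$\Leftrightarrow$(c)$\Leftrightarrow$(d), the generators description of $R(m)$ for (f), and Proposition \ref{prop: M reflexive} for (g) and (h)) is exactly the intended one. The identification you single out as the main obstacle is in fact immediate in $\Mon$: pulling $r\colon R\to A\times A$ back along $\langle 0,1\rangle$ yields $\{a\in A\mid (0,a)\in R\}$, which is the zero-class. Your reduction of (f) to the reverse inclusion $[0]_{R(m)}\subseteq M$ is also correct.

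The one genuine gap is (d)$\Leftrightarrow$(e). The triangle identities give only one direction: if $\eta_m$ is invertible then so is $F(\eta_m)$, and $\varepsilon_{F(m)}\circ F(\eta_m)=1_{F(m)}$ forces $\varepsilon_{F(m)}$ to be invertible. The converse is not a formal consequence of an adjunction, and here it fails as you state it. Since units and counits live in the fibres over $A$, which are posets of subobjects, one has $FNF(m)=F(m)$ for every $m$: indeed $F(m)$ belongs to $\S_{NF(m)}$, and every relation whose zero-class contains $[0]_{F(m)}\supseteq M$ already lies in $\S_m$, so the two intersections coincide. Hence $\varepsilon_{R(m)}$ is \emph{always} an isomorphism and condition (e), read literally, is vacuous; Example \ref{eg:1} makes this concrete, since there $M=\{1,2\}$ is not a clot yet $R(m)$ is the smallest reflexive relation with its own zero-class $\{1,2,3\}$ and so lies in $\Fix(\varepsilon_R)$. (This is arguably an imprecision in the statement itself.) To close the loop you must read (e) as saying that the pair $(m,R(m))$ corresponds under the equivalence $\Fix(\eta_R)\simeq\Fix(\varepsilon_R)$, i.e.\ add the clause $N(R(m))=M$, which brings you back to (b). As written, the appeal to the triangle identities does not prove (e)$\Rightarrow$(d).
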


\section{Examples}\label{sec:examples}

Examples \ref{eg:1}--\ref{eg:3} illustrate the strict inclusions displayed in $(\ref{eq:stric inclusions})$. Example \ref{eg:4} describes a normal submonoid which is not a subgroup. Examples \ref{eg:5}--\ref{eg:11} are related with observations made in Section \ref{sec:2}.

\renewcommand{\theenumi}{\Alph{enumi}}

\begin{enumerate}
\item\label{eg:1} A simple example of a submonoid that is not a clot is obtained as follows. Take the monoid  with four elements \{1,2,3,4\} and multiplication defined by
\begin{equation}
\begin{tabular}{c|cccc}
$\cdot$ & 1 & 2 & 3 & 4 \\ \hline
1 & 1 & 2 & 3 & 4 \\
2 & 2 & 2 & 3 & 3 \\
3 & 3 & 2 & 3 & 2 \\
4 & 4 & 2 & 3 & 1 \\
\end{tabular}
\end{equation}
and observe that $M=\{1,2\}$ is a submonoid. However, the smallest compatible reflexive relation $R$ containing the pair $(1,2)$  is $$\{(1,1),(1,2),(1,3),(2,2),(2,3),(3,2),(3,3),(4,2),(4,3),(4,4)\}$$ whose zero-class is the submonoid $N(R)=\{1,2,3\}$.

\item\label{eg:2} An example of a clot which is not a positive cone is provided by the monomorphism $m\colon{\N\to F(a,b)}$, from the natural numbers to the free monoid on two generators $a$ and $b$, which associates $1\in\N$ to the word $ab$. Then $m(n)=(ab)^n$ and $\N$ is the clot of a reflexive relation but it is not the positive cone of any reflexive and transitive relation on $A=F(a,b)$.

We can prove that in two different ways:

\begin{enumerate}
\item[(i)] First we observe that in $R(m)$ we have that $1Rx$ if and only if $x=(ab)^n$ for some natural number $n$. This already shows that $\N$ is a clot. But it is not a positive cone because the relation is not transitive. Indeed, otherwise, $1Raabb$ should hold since $1Rab$ and $abRaabb$, but $aabb$ is not of the form $(ab)^n$ for any natural n. The fact that $abRaabb$ follows from reflexivity and compatibility of the relation: since $aRa$, $1Rab$ and $bRb$ we obtain $abRaabb$.

\item[(ii)] Another way to see that is to use the characterizations given in Theorem \ref{thm:R} and Theorem \ref{thm:P}. First we observe that condition $(\ref{cond f of thm 4})$ of Theorem \ref{thm:R} is satisfied because $a_1 +\ldots + a_n=0$ implies that all $a_i$ have to be zero. Secondly, we observe that condition $(\ref{cond h of thm 3})$ of Theorem \ref{thm:P} is not satisfied:  taking $m=2$, $n=2$, $a_{1j}=0$, $u_{11}=(ab)$, $u_{12}=0$, $a_{21}=(a)$, $a_{22}=(b)$, $u_{21}=(ab)$, $u_{22}=0$ we observe that $(ab)=(a)+(b)$ but $(a)+(ab)+(b)=aabb$ is not of the form $(ab)^n$ and hence it is not in the image of the monomorphism $m\colon{\N\to F(a,b)}$.
\end{enumerate}

\item\label{eg:3} An example of a positive cone which is not a normal submonoid (with respect to $\Eq(\Mon)$) is provided by the inclusion of the natural numbers in the group of the integers.

\item\label{eg:4} The inclusion of the even numbers in the monoid of the natural numbers is an example of a normal submonoid. Indeed it satisfies the condition (\ref{item:8 of Eq}) of Theorem \ref{thm:E}. 

\item\label{eg:5} For every finite monoid, the relation $R_M$  is an internal reflexive relation (Section \ref{sec:2}) because these monoids are Dedekind finite.

\item\label{eg:6} The monoid $\Set (X, X)$ of endofunctions of a set $X$ for composition, if $X$ is an infinite set, is not Dedekind finite. Indeed any surjective function that is not injective has a right inverse but not a left inverse.

\item\label{eg:7} The bicylic monoid is the monoid freely generated by two elements $b$ and $c$, satisfying the equation $bc=1$. It is not Dedekind finite since $bc=1$ but $cb \neq 1$.

 \item\label{eg:8} There is no functor $\Mono(\Mon) \rightarrow \Eq (\Mon)$ assigning to each submonoid $M$ of $A$ its syntactic $M$-congruence (Section \ref{sec:2}). To show that let us consider the composition of morphisms $S_2 \rightarrow S_3 \rightarrow S_2$ of symmetric groups that gives the identity $id_{S_2}$. It induces also the identity in $\Mono(\Mon)$ of
     the monomorphism $m \colon S_2 \rightarrow S_2$, say $g f = id_m$. There is no way to define a function sending this composition to the identity of the equivalence $S_2 \times S_2$ on $S_2$. Indeed it is enough to notice that the zero class of the domain is $S_2$ and the one of $\sim_{S_2}$ in $S_3$ is just the identity $\epsilon$:
     \[ (1 3) \epsilon (1 3) \in S_2 \mbox{  but} (1 3) (1 2) (1 3) = (2 3) \notin S_2 .\] So every congruence class is a singleton.

     Consequently there is no functor sending a submonoid $M$ of $A$ to its $M$-preorder or to its $M$-reflexive relation.

\item\label{eg:9} For every submonoid $M$ of $A$ we have that if $A$ is Dedekind finite then condition $(\ref{eq: (*) extra condition normal reflexive})$ holds and that if $A$ satisfy $(\ref{eq: (*) extra condition normal reflexive})$  then $R_M$ is compatible but these two implications are strict. Indeed,
    \begin{enumerate}
    \item[(i)] If $M=A$ then condition $(\ref{eq: (*) extra condition normal reflexive})$ holds and $A$ may be one of the examples of a non Dedekind finite monoid.
    \item[(ii)] There are  internal reflexive relations $R_M$  that do not satisfy condition $(\ref{eq: (*) extra condition normal reflexive})$. Let $A= \Set(\N, \N)$ for composition of functions and $M=\{ 1_{\N}\}$. Then $R_M$ is compatible since now $a R_M b$ if, for every functions $x, y \colon \N \to \N$, $$x a y = 1  \Rightarrow x b y = 1,$$ and we can find examples of functions $x, y, s, t$  such that $xy= 1, xs =1$ and $ty=1$ but $ts$ is not equal to $ 1$. For example,

        $x \colon \N \to \N$ taking zero to zero and $n > 0$ to $n-1$;

        $y \colon \N \to \N$ sending each $n > 0$ to $n+1$;

        $s \colon \N \to \N$ taking zero to zero and $n > 0 $ to $n + 1$;

        $t \colon \N \to \N$ taking 0 to 5 and $n > 0$ to $n-1$.

    Then $xy= 1, xs =1$ and $ty=1$ but $ts(0)= 5$.

    \end{enumerate}

\item\label{eg:10} We do not know whether  $R_M$ is or is not always an internal relation. The following example brings some light on the problem but does not solve it.

Let $A$ be the monoid $\Set (\N, \N)$, of all endofunctions of the natural numbers for composition, and $M$ be the submonoid generated by the function $u = 2 \times -$. Then $M$ consists of all functions $ u^n = 2^n \times -$ for some $n \in \N_0$.

Then $u^n$ for $n > 0$ does not belong to the zero set of the relation $R_M$.
    Indeed, for $f, g \in A$ defined by $g(x) = x + 1$ and $f(x) = x - 1$ if $x > 1$ and $f(1)=1$ we have that $f \cdot g = 1_{\N}$ but $f \cdot u^n \cdot g \not\in M$ because
    $f \cdot u^n \cdot g (x) = 2^n(x+1) - 1$, for $x \in \N$.

    Thus, $[1]_{R_M} = \{ 1_{\N}\}$ and so $M$ is not the zero class of any internal reflexive relation on $A$.

\item\label{eg:11}    For any submonoid $M$ of a monoid $A$ we have the following inclusions
    $$ [1]_{R_M} \subseteq M \subseteq [1]_{R(m)}, $$
    that, in the previous example, are both strict. Indeed, not only $[1]_{R_M}$ is a proper submonoid of $M$ but also
    $$ f \cdot u^n \cdot g \in [1]_{R(m)}$$ because $1 = f \cdot g$ and $ h = f \cdot u^n \cdot g $ and so
    $1 R(m) h$ but $h$ does not belong to $M$.
\end{enumerate}

\section{Final remarks}

In \cite{JMU1}, the authors give a categorical definition of clot in semi-abelian categories that they extend in \cite{JMU2}  to the context of pointed regular categories with finite coproducts and prove that clots are the same as zero classes of internal reflexive relations, called there semicongruences.
     In this work we consider clots in the context of pointed, wellpowered and finitely complete categories with arbitrary intersections of subobjects that seems to be the more general context where studying clots versus internal reflexive relations makes sense. In particular, if  $\C = \Mon$, the semicongruence associated to each clot $m: M \to A$ in Theorem 2.2 of \cite{JMU2} is the internal relation $R(m)$ defined above.

In \cite{M}, for regular categories $\C$ with pushouts of split monomorphisms along arbitrary maps, G. Metere proves that the restriction of the bifibration $ (\; \; )_0 \colon{ \RGraph(\C) \to \C}$ to $\RRel(\C)$
is a bifibration and this, in particular, enables him to construct a left adjoint to the normalization functor. When $\C$ is the category of monoids, this gives another way to construct the left adjoint (\ref{eq:adjoint R}) and, in \cite{BM}, for
Mal'tsev categories, this provides the left adjoint to $N \colon \Eq(\C) \to \Mono(\C)$.

In order to cover ideals, as a generalization of clots  \cite{AU,NMF.AM.AU.VdL}, we would need to replace the study of binary internal relations by the study of jontly monic spans of which one leg is surjective, which would require a different approach to the one followed here.

In the present work we have shown that the notion of normal mono\-morphism can be defined as the zero-class of certain types of internal binary relations. This means that every subcategory $\mathcal{R}$ of internal binary relations on a pointed category $\C$, for which the normalization functor admits a left adjoint, gives rise to a class of normal monomorphisms as the zero classes of relations in $\mathcal{R}$. We have studied the particular case where $\C=\Mon$ and $\mathcal{R}$ consists of reflexive relations, preorders, and congruences, from which the respective zero-classes are clots, positive cones, and classical normal subobjects. It is clear that other situations may be of interest too, namely reflexive and symmetric relations.




\end{document}